\newtheorem{theorem}{Theorem}[section]
\newtheorem{lemma}[theorem]{Lemma}
\newtheorem{observation}[theorem]{Observation}
\theoremstyle{definition}
\newtheorem{definition}[theorem]{Definition}
\theoremstyle{remark}
\newcommand\remove[1]{}
\def\f2{\mathbb{F}_2}
\begin{document}

\title{Distortion of embeddings of binary trees into diamond graphs}

\author{Siu~Lam~Leung, Sarah~Nelson, Sofiya~Ostrovska, and Mikhail~Ostrovskii}

\date{\today}
\maketitle

\begin{abstract}

Diamond graphs and binary trees are important examples in the
theory of metric embeddings and also in the theory of metric
characterizations of Banach spaces. Some results for these
families of graphs are parallel to each other, for example
superreflexivity of Banach spaces can be characterized both in
terms of binary trees (Bourgain, 1986) and diamond graphs
(Johnson-Schechtman, 2009). In this connection, it is natural to
ask whether one of these families admits uniformly bilipschitz
embeddings into the other. This question was answered in the
negative by Ostrovskii (2014), who left it open to determine the
order of growth of the distortions. The main purpose of this paper
is to get a sharp up-to-a-logarithmic-factor estimate for the
distortions of embeddings of binary trees into diamond graphs,
and, more generally, into diamond graphs of any finite branching
$k\ge 2$. Estimates for distortions of embeddings of diamonds into
infinitely branching diamonds are also obtained.

\end{abstract}

{\small \noindent{\bf 2010 Mathematics Subject Classification.}
Primary: 46B85; Secondary: 05C12, 30L05.}\smallskip

{\small \noindent{\bf Keywords.} binary tree, diamond graph,
distortion of a bilipschitz embedding, Lipschitz map}\medskip

\begin{large}

\section{Introduction}

Binary trees and diamond graphs play an important role in the
theory of metric embeddings and metric characterizations of
properties of Banach spaces, see
\cite{Bou86,BC05,GNRS04,JS09,Klo14,LN04,NR03,Ost11,Ost14,Reg13}
and also presentations in the books \cite{Ost13,Pis16}.

Some results for these families of graphs are parallel to each
other, for example superreflexivity of Banach spaces can be
characterized both in terms of binary trees (Bourgain
\cite{Bou86}) and diamond graphs (Johnson-Schechtman \cite{JS09}).
In this connection, it is natural to ask whether these families of
graphs admit bilipschitz embeddings with uniformly bounded
distortions one into another. In one direction the answer is
clear: The fact that diamond graphs do not admit uniformly
bilipschitz embeddings into binary trees follows immediately from
the combination of the result of Rabinovich and Raz
\cite[Corollary 5.3]{RR98} stating that the distortion of any
embedding of an $n$-cycle into any tree is $\ge\frac{n}3-1$, and
the observation that large diamond graphs contain large cycles
isometrically. As for the opposite direction, it was proved in
\cite{Ost14} that binary trees do no admit uniformly bilipschitz
embeddings into diamond graphs. The goal of this paper is to get a
sharp-up-to-a-logarithmic-factor estimate for the distortions of
embeddings of binary trees into diamond graphs, and, more
generally, into diamond graphs of any finite branching $k\ge 2$.
In addition, estimates for distortions of embeddings of diamonds
into infinitely branching diamonds {are obtained}.

\section{Definitions and the main result}

To begin with, let us present the necessary definitions.

\begin{definition}\label{D:trees} A {\it binary tree of  depth $n$}, denoted $T_n$, is a
finite graph in which each vertex is represented by a finite
(possibly empty) sequence of $0$'s and $1$'s, of length at most
$n$. Two vertices in $T_n$ are adjacent if the sequence
representing  one of them is obtained from the sequence
representing the other by adding one term on the right. (For
example, vertices corresponding to $(1,1,1,0)$ and $(1,1,1,0,1)$
are adjacent.) Vertices which correspond to sequences of length
$k$ are called vertices of $k$-th {\it generation}. The vertex
corresponding to the empty sequence is called  a {\it root}. If a
sequence $\tau$ is an initial segment of the sequence $\sigma$ we
say that $\sigma$ is a {\it descendant} of $\tau$ and that $\tau$
is an {\it ancestor} of $\sigma$. See Figure \ref{F:Tree} for a
sketch of $T_3$.
\end{definition}

\begin{figure}
{
\begin{tikzpicture}[level/.style={sibling distance=70mm/#1}]
\node [circle,draw] (z) {\hbox{~~}}
  child {node [circle,draw] (a) {\hbox{~~}}
    child {node [circle,draw] (b) {\hbox{~~}}
      child {node [circle,draw]  {\hbox{~~}}}
      child {node [circle,draw]  {\hbox{~~}}}
    }
    child {node [circle,draw] (g) {\hbox{~~}}
      child {node [circle,draw]  {\hbox{~~}}}
      child {node [circle,draw]  {\hbox{~~}}}
    }
  }
  child {node [circle,draw] (a) {\hbox{~~}}
    child {node [circle,draw] (b) {\hbox{~~}}
      child {node [circle,draw]  {\hbox{~~}}}
      child {node [circle,draw]  {\hbox{~~}}}
    }
    child {node [circle,draw] (g) {\hbox{~~}}
      child {node [circle,draw]  {\hbox{~~}}}
      child {node [circle,draw]  {\hbox{~~}}}
      }
    };
\end{tikzpicture}
} \caption{The binary tree of depth $3$, that is, $T_3$.}
\label{F:Tree}
\end{figure}

\begin{definition}[\cite{GNRS04}]\label{D:Diamonds}
Diamond graphs $\{D_n\}_{n=0}^\infty$ are defined inductively as
follows: The {\it diamond graph} of level $0$ is denoted by $D_0$.
It has two vertices joined by an edge. The {\it diamond graph}
$D_n$ is obtained from $D_{n-1}$ as follows. Given an edge $uv\in
E(D_{n-1})$, it is replaced by a quadrilateral $u, a, v, b$, with
edges $ua$, $av$, $vb$, $bu$. See Figure \ref{F:Diamond2} for a
sketch of $D_2$.
\end{definition}

All graphs considered in this paper are endowed with the shortest
path distance: the distance between any two vertices is the number
of edges in a shortest path between them.

\begin{definition}\label{D:dist}
Let $M$ be a finite metric space and $\{R_n\}_{n=1}^\infty$ be a
sequence of finite metric spaces with increasing cardinalities.
The {\it distortion} $c_R(M)$ of embeddings of $M$ into
$\{R_n\}_{n=1}^\infty$ is defined as the infimum of $C\ge 1$ for
which there is $n\in\mathbb{N}$, a map $f:M\to R_n$, and a number
$r=r(f)>0$ - called {\it scaling factor} - satisfying the
condition:
\begin{equation}\label{E:MapDist}\forall u,v\in M\quad rd_M(u,v)\le
d_{R_n}(f(u),f(v))\le rCd_M(u,v).\end{equation}
\end{definition}

Therefore, $c_D(T_n)$ is the infimum of distortions of embeddings
of the binary tree $T_n$ into diamond graphs. Our main result is
expressed by the following assertion:

\begin{theorem}\label{T:main} There exists a constant $c>0$ such that
\[c\,\frac{n}{\log_2n}\le c_D(T_n)\le 2n\]
for all $n\ge2$.
\end{theorem}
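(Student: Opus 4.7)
The two bounds are handled separately.

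For the upper bound $c_D(T_n)\le 2n$, the plan is to exhibit, for a suitably large $m$, an injective $1$-Lipschitz map $f:T_n\to D_m$. For such a map, any two distinct vertices $u,v\in T_n$ satisfy $1\le d_{D_m}(f(u),f(v))\le d_{T_n}(u,v)\le 2n$, so scaling factor $r=1/(2n)$ together with distortion constant $C=2n$ meets the requirements of Definition \ref{D:dist}. To produce $f$, it suffices to realize $T_n$ as a subgraph of some $D_m$; I would do this by induction on $n$, using the recursive construction of $D_{k+1}$ from $D_k$ (each edge replaced by a $4$-cycle): the two new vertices of each such $4$-cycle supply precisely the two children needed to extend the embedded binary tree by one further level, with the subtrees of each internal vertex planted in disjoint sub-diamonds.

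For the lower bound $c_D(T_n)\ge cn/\log_2 n$, suppose $f:T_n\to D_m$ is an embedding with distortion $C$ and scaling factor $r$. The plan has two ingredients. First, a coarse comparison pins down the ``scale'' of the image: the image of any root-to-leaf geodesic has length at least $rn$, but must fit inside a graph of diameter $2^m$, so $rn\le 2^m$; combined with basic volume counting ($|T_n|\le|D_m|$), this relates $m$, $r$ and $n$. Second, and substantially more delicate, one proves a structural metric inequality for $D_m$ forcing the depth of any embedded binary tree to satisfy $n\le C\cdot h(m,r)$ for some function $h$ that grows only logarithmically in its arguments. Chaining the two ingredients then yields $C\ge cn/\log_2 n$.

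The principal obstacle is the structural inequality. Standard metric invariants --- Markov type and convexity, cotype, Poincar\'e-type inequalities --- do not separate $T_n$ from $D_m$ sharply enough, since $D_m$ embeds isometrically into $\ell_1$, where $T_n$ already embeds with distortion $1$; Markov convexity alone produces only a bound of order $\sqrt{\log n/\log m}$, well short of the target rate. The argument must therefore exploit the rigid series-parallel recursion of $D_m$ itself: at each of the $m$ nested diamond scales, the metric midpoints of two ``distant'' vertices are constrained to lie on the corresponding $4$-cycle, which sharply limits how densely the ``fork'' configurations of the binary tree can be packed. A careful level-by-level, or scale-by-scale, accounting of these fork constraints along the recursive decomposition of $D_m$ is the place where the ultimate $1/\log_2 n$ factor is expected to be squeezed out.
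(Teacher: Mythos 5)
Your upper bound breaks down at the construction step. An injective $1$-Lipschitz map between graphs with the shortest-path metric must send adjacent vertices to adjacent vertices, so what you are asking for is precisely a subgraph embedding of $T_n$ into some $D_m$, and for $n\ge 3$ no such embedding exists. Indeed, every edge of $D_m$ joins a vertex created at the last step of the construction to an older vertex, so $D_m$ is bipartite with one colour class consisting entirely of vertices of degree $2$. A connected subgraph must respect this bipartition, hence one of the two colour classes of $T_n$ (even generations or odd generations) would land entirely on degree-$2$ vertices of $D_m$; but for $n\ge 3$ both classes of $T_n$ contain vertices of degree $3$. This is exactly why your inductive picture fails: the two new vertices of each quadrilateral have degree $2$, and after all subsequent steps their neighbours are again newest-generation vertices, so they cannot serve as internal tree vertices. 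The paper sidesteps this by abandoning the $1$-Lipschitz requirement: it maps the root of $T_n$ to the bottom of $D_k$ (which has degree $2^k$) and all remaining $2^{n+1}-2$ vertices to distinct neighbours of the bottom, so every image distance is $1$ or $2$ while tree distances lie between $1$ and $2n$; with scaling factor $1/n$ this yields distortion $2n$.

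For the lower bound you have correctly located the difficulty in a ``structural inequality,'' but you do not supply one, so there is no proof here, only a statement of intent. Worse, the skeleton does not chain: you want $n\le C\cdot h(m,r)$ with $h$ logarithmic in $m$ and then to convert this into $C\ge cn/\log_2 n$, but your coarse comparison ($rn\le 2^m$ and $|T_n|\le |D_m|$) bounds $m$ only from \emph{below}; nothing prevents an embedding from using $D_m$ with $m$ vastly larger than $n$, so a bound logarithmic in $m$ gives nothing. The paper's argument is local and scale-based rather than global: after normalizing the scaling factor to $2^{p}$, it uses the fact that the generation-$d$ vertices of the diamond form cut sets separating components of diameter less than $2^d$ to find, on a root-to-generation-$(n-r)$ path, a vertex $\tau_i$ whose image lies within $\alpha_n 2^{p}$ of a generation-$d$ vertex $v$; the Lipschitz condition then traps the images of all $2^r$ descendants of $\tau_i$ in the next $r$ generations inside the union of two subdiamonds of diameter $2^{d-1}$ glued at $v$; and an elementary counting lemma bounds any $2^{p}$-separated set in that union by $(2k)^{d-p}$. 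Taking $r$ of order $\log_2 n$ and $d$ the largest integer with $2^{d-p}<n-r$ makes these three requirements contradictory unless $\alpha_n\gtrsim n/\log_2 n$. It is this separated-set counting inside a trapped pair of subdiamonds --- not midpoint or ``fork'' constraints --- that produces the $\log_2 n$ in the denominator, and no version of it appears in your proposal.
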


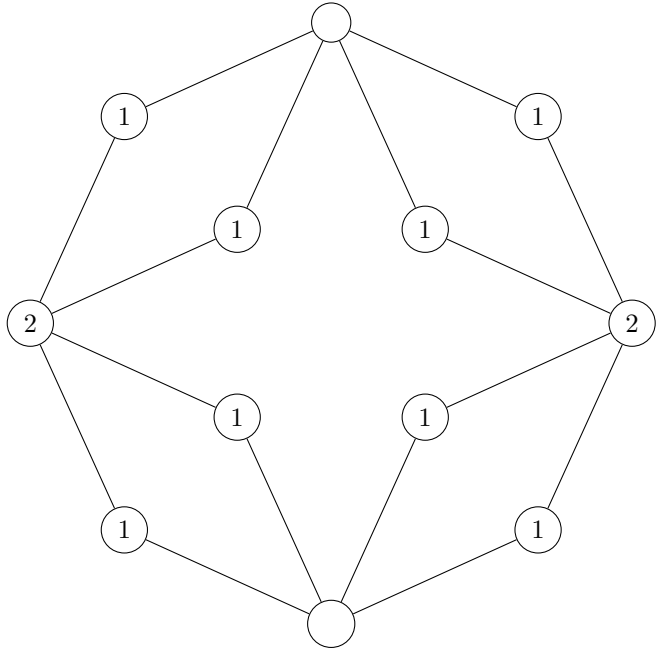
\begin{figure}
\begin{center}
\begin{tikzpicture}
  [scale=.25,auto=left,every node/.style={circle,draw}]
  \node (n1) at (16,0) {\hbox{~~~}};
  \node (n2) at (5,5)  {$1$};
  \node (n3) at (11,11)  {$1$};
  \node (n4) at (0,16) {$2$};
  \node (n5) at (5,27)  {$1$};
  \node (n6) at (11,21)  {$1$};
  \node (n7) at (16,32) {\hbox{~~}};
  \node (n8) at (21,21)  {$1$};
  \node (n9) at (27,27)  {$1$};
  \node (n10) at (32,16) {$2$};
  \node (n11) at (21,11)  {$1$};
  \node (n12) at (27,5)  {$1$};

  \foreach \from/\to in {n1/n2,n1/n3,n2/n4,n3/n4,n4/n5,n4/n6,n6/n7,n5/n7,n7/n8,n7/n9,n8/n10,n9/n10,n10/n11,n10/n12,n11/n1,n12/n1}
    \draw (\from) -- (\to);

\end{tikzpicture}
\caption{Diamond $D_2$ in which generations of vertices are
shown.}\label{F:Diamond2}
\end{center}
\end{figure}

In recent years \cite{BCDKRSZ16+,LR10,OR16,OR16+} we see an
increasing interest in diamonds of high branching (see Definition
\ref{D:BranchDiam}). In view of this, we prove versions of Theorem
\ref{T:main} for such graphs (Theorems \ref{T:FinBranch} and
\ref{T:InfinBranch}).

\begin{definition}\label{D:BranchDiam}
Fix $k\in\mathbb{N}\cup\{\infty\}$, $k\ge 2$. Let $D_{0,k}$ be a
graph consisting of two vertices joined by one edge. The graph
$D_{n+1,k}$ is obtained from $D_{n,k}$ if we replace each edge
$uv$ in $D_{n,k}$ by a set of $k$ paths of length $2$ joining $u$
and $v$. We call the graphs $D_{n,k}$ {\it diamonds of branching}
$k$ if $k$ is finite and {\it diamonds of infinite branching} if
$k=\infty$.

Call one of the vertices of $D_{0,k}$ the {\it top} and the other
the {\it bottom}. Define the {\it top} and the {\it bottom} of
$D_{n,k}$ as vertices which evolved from the top and the bottom of
$D_{0,k}$, respectively. A {\it subdiamond} of $D_{n,k}$ is a
subgraph which evolved from an edge of some $D_{m,k}$ for $0\le
m\le n$. The {\it top} and {\it bottom} of a subdiamond of
$D_{n,k}$  are defined as the vertices of the subdiamond which are
the closest to the top and bottom of  $D_{n,k}$, respectively.
\end{definition}

It is can be noticed  that $D_n=D_{n,2}$. Let $c_{(D,k)}(M)$
denote the distortion of embeddings of a finite metric space $M$
into $\{D_{n,k}\}$, as in  Definition \ref{D:dist}. The next
generalization of Theorem \ref{T:main} holds:

\begin{theorem}\label{T:FinBranch} If $k$ is finite, then there exists $c(k)>0$ such that
\[c(k)\frac{n}{\log_2n}\le c_{(D,k)}(T_n)\le 2n\]
for all $n\ge 2$.
\end{theorem}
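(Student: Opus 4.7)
The upper bound $c_{(D,k)}(T_n) \le 2n$ is the generic one forced by $\diam(T_n) = 2n$, so my plan is to exhibit a $1$-Lipschitz injection $\iota : T_n \to D_{N,k}$ for some large $N$. Such an injection exists because $T_n$ sits as a subgraph of $D_{N,k}$ once $N$ is large: the top vertex of $D_{N,k}$ has degree $k^N$, providing ample room to root the binary tree, and one extends recursively through the rich branching of $D_{N,k}$. Given such an $\iota$, every pair of distinct vertices satisfies $1 \le d(\iota(u), \iota(v)) \le d(u,v) \le 2n$, so choosing $r = 1/(2n)$ verifies \eqref{E:MapDist} with $C = 2n$. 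Note that this argument uses nothing beyond $\diam(T_n) = 2n$ and the fact that $D_{N,k}$ can host a degree-$3$ tree, so it works identically for every $k \ge 2$.

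For the lower bound I would follow the blueprint of Theorem~\ref{T:main}, which treats $k = 2$, and track the dependence on $k$ throughout. That proof must rest on a quantitative structural property of $D_{n,k}$---a Poincar\'e-type or ``fork'' inequality---that binary trees violate at many scales simultaneously. Concretely, given a bilipschitz embedding $f : T_n \to D_{N,k}$ with scaling factor $r$ and distortion $C$, I would select a hierarchy of roughly $\log_2 n$ well-separated dyadic scales in $T_n$ and, at each scale, apply the diamond inequality to triples (ancestor, two descendants) forming natural forks in the tree. Summing over forks at all selected scales, the tree contributes a total of order $n$ to one side of the resulting inequality, while the diamond side tolerates only an amount of order $\log n$ independent of $C$; rearranging forces $C \ge c(k)\, n/\log_2 n$.

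The principal obstacle is to isolate the correct $k$-dependent constant in the diamond-graph inequality and verify that it does not degenerate for finite $k$. Combinatorially, $k$ enters when one counts shortest paths between the top and bottom of a subdiamond of level $m$ (there are $k^m$ of them) and when one averages contributions from edges incident to a branching vertex; these factors should appear multiplicatively in $k$ but not in $n$, which is what one needs to produce a constant $c(k) > 0$ independent of $n$. The finite-branching hypothesis is essential here, as Theorem~\ref{T:InfinBranch} indicates that $k = \infty$ demands a separate argument with different bounds. Modulo this bookkeeping, the combinatorial skeleton of the $k = 2$ argument in Theorem~\ref{T:main} should transfer essentially verbatim, with the selection of scales and the telescoping of fork estimates unchanged.
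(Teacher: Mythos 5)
Your upper bound rests on a false claim: $T_n$ is not a subgraph of any $D_{N,k}$ once $n\ge 3$. Every edge of $D_{N,k}$ is created at the last step of the construction and therefore has an endpoint which is a newest-generation vertex of degree exactly $2$; consequently every edge of any subgraph of $D_{N,k}$ has an endpoint of degree at most $2$ in that subgraph, whereas $T_n$ for $n\ge 3$ contains edges both of whose endpoints have degree $3$ (a first-generation vertex joined to a second-generation vertex). So no injective graph homomorphism, and in particular no $1$-Lipschitz injective embedding of the kind you describe, exists. The remedy is to abandon the subgraph requirement altogether: the bottom vertex of $D_N$ has degree $2^N$, so $D_N$ contains the star $K_{1,2^N}$ isometrically; mapping the root of $T_n$ to the center and the remaining $2^{n+1}-2$ vertices injectively to leaves gives image distances in $\{1,2\}$, and since $1\le d_{T_n}(u,v)\le 2n$ one gets $\frac1n\,d_{T_n}(u,v)\le d(F(u),F(v))\le 2\,d_{T_n}(u,v)$, i.e.\ distortion $2n$ with scaling factor $r=1/n$. (Your arithmetic with $r=1/(2n)$ would be fine if your injection existed; its existence is the problem.)

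The lower bound as written is not a proof but a guess at the shape of one, and the guessed mechanism does not match what actually works. You never state the ``Poincar\'e-type or fork inequality'' you propose to sum over $\log_2 n$ scales, so nothing can be verified, and no such inequality drives the argument. What succeeds is a localization-plus-counting argument: (i) each vertex $v$ of generation $d$ in $D_{m,k}$ has $2^{d-1}$-neighborhood equal to two subdiamonds of diameter $2^{d-1}$ glued at $v$, and removing all generation-$d$ vertices leaves components of diameter $<2^d$; (ii) hence the image of a root-to-generation-$(n-r)$ path must come within $\alpha_n 2^{p(n)}$ of some such $v$, after which the images of all descendants of that vertex over the next $r$ generations are trapped in those two subdiamonds; (iii) those more than $2^r$ images form a $2^{p(n)}$-separated set, while a subdiamond of diameter $2^q$ contains at most $k(2k)^{q-p(n)}$ points of any such set. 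Taking $r\approx\log_2(2k)\log_2 n$ and $d$ maximal with $2^{d-p(n)}<n-r$ makes these constraints incompatible unless $\alpha_n\gtrsim n/\log_2 n$. This entropy bound is precisely where finiteness of $k$ enters (it fails for $k=\infty$), not a constant in a Poincar\'e inequality. Finally, note that the $k=2$ case is itself derived from the general finite-$k$ statement, so ``follow the $k=2$ blueprint and track constants'' carries no independent content: the general case is the one that must be proved from scratch.
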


For infinitely branching diamonds,  the following weaker version
of Theorem \ref{T:main} is valid:

\begin{theorem}\label{T:InfinBranch} There exists
constant $c(\infty)>0$ such that \[c(\infty)\sqrt{n}\le
c_{(D,\infty)}(T_n)\le 2n.\]
\end{theorem}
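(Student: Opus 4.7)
The upper bound $c_{(D,\infty)}(T_n)\le 2n$ is inherited from the trivial embedding used in Theorems \ref{T:main} and \ref{T:FinBranch}: $T_n$ embeds injectively into a long enough geodesic of some $D_{m,\infty}$, and the distortion cannot exceed the diameter $2n$ of $T_n$. The substance of the proof is the lower bound $c_{(D,\infty)}(T_n)\ge c(\infty)\sqrt n$.

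For the lower bound I would mimic the Markov-convexity/fork strategy behind Bourgain's bound for trees in Hilbert space, but with a 4-point inequality tailored to infinitely branching diamonds. The key structural lemma to establish is: there exists $\kappa>0$, independent of $m$, such that for every four vertices $a,b,u,v\in D_{m,\infty}$ with $u$ and $v$ each lying on a geodesic from $a$ to $b$ at approximately its midpoint,
\[d(u,v)^2\le \kappa\,d(a,b)\cdot\bigl(d(a,u)+d(u,b)-d(a,b)+d(a,v)+d(v,b)-d(a,b)+1\bigr).\]
The proof should proceed by induction on $m$: each edge of $D_{m-1,\infty}$ is replaced by infinitely many length-$2$ paths, and a midpoint pair in $D_{m,\infty}$ either sits inside a single sub-diamond (where the induction hypothesis applies, with halved scales) or straddles two sub-diamonds sharing a boundary vertex (where the inequality can be verified by inspection).

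With this inequality in hand, assume $f\colon T_n\to D_{m,\infty}$ has scaling $r$ and distortion $C$, and fix a root-to-leaf path $\rho_0,\rho_1,\ldots,\rho_n$ in $T_n$. For each $i\in\{1,\dots,n\}$ set $u_i=\rho_i$, let $v_i$ be the sibling of $\rho_i$, and let $a_i,b_i$ be deepest descendants of $u_i,v_i$ respectively; then $d_{T_n}(a_i,b_i)=2(n-i+1)$, $d_{T_n}(u_i,v_i)=2$, and both $u_i,v_i$ lie on $T_n$-geodesics from $a_i$ to $b_i$. Applying the fork inequality to $(f(a_i),f(b_i),f(u_i),f(v_i))$ and using the bilipschitz bounds $r\,d_{T_n}\le d_{D_{m,\infty}}(f(\cdot),f(\cdot))\le rC\,d_{T_n}$ gives a level-$i$ estimate in which the left-hand side is $\Omega(r^2)$ and the defect on the right is of order $r(C-1)(n-i+1)$. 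Summing over $i$ and arranging the telescoping built into the "defect" contributions along the chosen root-to-leaf path yields an inequality of the form $n\cdot r^2\lesssim \kappa\,r^2 C^2$, which rearranges to $C\ge c(\infty)\sqrt n$.

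The chief technical obstacle is the fork inequality itself. In the finite-branching setting of Theorems \ref{T:main} and \ref{T:FinBranch} one can appeal to cut/counting arguments that fail completely under infinite branching, so the proof must be purely metric and inductive on $m$. Extracting the correct quadratic dependence—which is what produces the $\sqrt n$ rate, as opposed to the sharper $n/\log_2 n$ rate available with finite branching—will require a careful case analysis of where the two midpoint vertices sit in the recursive sub-diamond decomposition, and it is here that the loss of a full $\log_2 n$ factor (compared to Theorem \ref{T:FinBranch}) seems unavoidable with this technique.
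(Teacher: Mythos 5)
Your upper-bound sketch misremembers the construction: the paper does not embed $T_n$ into a geodesic (an arbitrary injection of $T_n$ into a path can have distortion exponential in $n$), but into the star $K_{1,2^k}$ formed by the bottom of $D_k$ together with its $2^k$ neighbours, where all nonzero distances are $1$ or $2$; that is what yields $2n$. The serious problem, however, is in the lower bound. Your key structural lemma --- the fork inequality $d(u,v)^2\le \kappa\, d(a,b)\cdot(\mathrm{excess}+1)$ with $\kappa$ independent of $m$ --- is false in $D_{m,\infty}$, and indeed already in $D_{m,2}$. Take $a$ and $b$ to be the bottom and top of $D_{m,k}$, so $d(a,b)=2^m$, and let $u=w_1$, $v=w_2$ be two distinct vertices created at the very first subdivision step. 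Each $w_j$ is an \emph{exact} midpoint of an $a$--$b$ geodesic, so both excess terms vanish; yet every path from $w_1$ to $w_2$ must pass through $a$ or $b$, whence $d(w_1,w_2)=2^m=d(a,b)$. Your inequality would then force $\kappa\ge 2^m$. This is precisely the midpoint configuration underlying the Newman--Rabinovich lower bound for embedding diamonds into Euclidean space, so no induction on $m$ can rescue a quadratic fork inequality; the case analysis you defer is where the statement breaks. (Separately, even granting the lemma, the level-$i$ estimate you describe has right-hand side of order $r^2C(C-1)(n-i)^2$, which dominates the left-hand side $r^2$ term by term, and the asserted telescoping to $nr^2\lesssim\kappa r^2C^2$ is not justified by the forks you choose.)

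The paper's actual argument is combinatorial rather than convexity-based, and it exploits exactly the feature your counterexample highlights: cut vertices. After normalizing the scaling factor to $2^{p(n)}$, one finds a tree vertex $\tau_i$ on a root-to-generation-$\lfloor n/3\rfloor$ path whose image lies within $\alpha_n2^{p(n)}$ of a vertex $v$ of generation $d$ in the diamond, where $2^d\approx 2^{p(n)}n/3$. By Observation \ref{O:Generat}, the $2^{d-1}$-neighbourhood of $v$ is a union of two subdiamonds pasted at $v$, with only two other ``exit'' vertices $x_1,x_2$. Every descending path from $\tau_i$ to generation $n$ must leave this neighbourhood; its first exiting vertex is mapped within $\alpha_n2^{p(n)}$ of $x_1$ or $x_2$ and has tree-generation at least $i+\Omega(n/\alpha_n)$. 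Applying the pigeonhole principle to four such paths through distinct grandchildren of $\tau_i$ produces two tree vertices at mutual distance $\Omega(n/\alpha_n)$ whose images are within $2\alpha_n2^{p(n)}$ of each other, whence $n/\alpha_n\lesssim\alpha_n$ and $\alpha_n\gtrsim\sqrt n$. To repair your proposal you would need to replace the fork lemma with a bottleneck argument of this type.
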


We refer to \cite{BM08} for graph-theoretical terminology and to
\cite{Ost13} for terminology of the theory of metric embeddings.

\section{Estimates from above}\label{S:Above}

Since $D_n$ is isometric to a subset of $D_{n,k}$ whenever $k\ge
2$, it suffices to prove the estimate from above for the binary
diamonds $\{D_n\}$.

\begin{proof}[Proof of $c_D(T_n)\le 2n$] Observe that the diamond $D_k$ contains
isometrically the tree which is customarily denoted $K_{1,2^k}$.
This tree has $2^k+1$ vertices, and one of the vertices is
incident to the remaining $2^k$ vertices. In fact, one can easily
establish by induction that the bottom of the diamond $D_k$ has
degree $2^k$, and the bottom together with all of its neighbors
forms the desired tree.
\medskip

Choose $k$ in such a way that $2^k+1\ge 2^{n+1}-1$, where
$2^{n+1}-1$  is the number of vertices in $T_n$.
\medskip

Now,  map the root of $T_n$ to the bottom of $D_k$ and all of
other vertices of $T_n$ to distinct vertices adjacent to the
bottom. Denote the obtained map by $F_n$ and the vertex set of
$T_n$ by $V(T_n)$. We claim that the following inequalities are
true
\begin{equation}\label{E:distor}
\forall u,v\in V(T_n)\quad \frac1{n}d_{T_n}(u,v)\le
d_{D_k}(F_n(u),F_n(v))\le 2d_{T_n}(u,v),
\end{equation}
yielding $c_D(T_n)\le 2n$.

Indeed, the right-hand side inequality follows from the fact that
any distance between two distinct vertices in $K_{1,2^k}$ does not
exceed $2$.

To justify the left-hand side inequality consider the two cases:

(1) One of the vertices, say $u$, is the root of $T_n$. Then
$d_{D_k}(F_n(u),F_n(v))=1$ and $d_{T_n}(u,v)\le n$. The left-hand
side inequality in this case follows.

(2) Neither $u$ nor $v$ is the root of $T_n$. Then
$d_{D_k}(F_n(u),F_n(v))=2$ and $d_{T_n}(u,v)\le 2n$. The left-hand
side inequality follows in this case, too.\end{proof}

\section{Estimates from below}\label{S:below}

\subsection{Diamonds of finite branching}

Observe that Theorem \ref{T:main} is a special case of Theorem
\ref{T:FinBranch}. For this reason, only   the lower estimate of
Theorem \ref{T:FinBranch} has to be proved.

\begin{proof}[Proof of $c_{(D,k)}(T_n)\ge
c(k)\,\frac{n}{\log_2n}$] Fix an integer $k$ ($2\le k<\infty$) for
the whole proof and omit from most of our notation dependence on
$k$, as it is clear that almost all of the introduced objects
depend on $k$. If $\alpha_{n}=3c_{(D,k)}(T_n)$, then there exists
a map $F_n$ of $V(T_n)$ into $V(D_{m(n),k})$ for some
$m(n)\in\mathbb{N}$ satisfying \eqref{E:MapDist} with
$C=\alpha_{n}$ and the scaling factor being an integer power of
$2$, that is,
\begin{equation}\label{E:MapDist2}\forall u,v\in V(T_n)\quad 2^{p(n)}d_{T_n}(u,v)\le
d_{D_{m(n),k}}(F_{n}(u),F_{n}(v))\le \alpha_n
2^{p(n)}d_{T_n}(u,v)\end{equation} for some $p(n)\in\mathbb{Z}$.
If $p(n)<0$, we compose the map $F_n$ with the natural map of
$D_{m(n),k}$ into $D_{m(n)-p(n),k}$. As the latter map increases
all distances into $2^{-p(n)}$ times, the resulting map has
scaling factor equal to $1$. Therefore, one may assume without
loss of generality that $p(n)\ge 0$.\medskip

Now our goal is to show that the existence of  $n,r,
d\in\mathbb{N},$ such  that the condition $1\le r<n$ is satisfied
simultaneously with the following three inequalities:

\begin{equation}\label{E:CannotLeave}2^{d-1}> \alpha_n 2^{p(n)}(r+1),\end{equation}

\begin{equation}\label{E:NotEnoughVert} (2k)^{d-p(n)}< 2^r,\end{equation}

\begin{equation}\label{E:Reach} 2^d<2^{p(n)} (n-r),\end{equation}
leads to a contradiction.
\medskip

We introduce generations of vertices in diamonds, including the
case $k=\infty$, as follows. Generations are labelled recursively
from the end in the following way. {\it Generation number} $1$ in
$D_{m,k}$ is the set of vertices which appeared in the last step
of the construction of $D_{m,k}$. Further, {\it generation number}
$2$ is the set of vertices which appeared in the previous step of
the construction, and so on. In this way, one obtains $m$ {\it
generations}, while the two original vertices do not belong to any
of the generations. See Figure \ref{F:Diamond2} for generations in
$D_2$. This definition leads to the following:

\begin{observation}\label{O:Generat} Let $m\in\mathbb{N}$ and
$k\in\mathbb{N}\cup\{\infty\}$, $k\ge 2$.\medskip

\noindent{\bf (1)} Let $v$ be a vertex of generation number $d$ in
$D_{m,k}$, where $d\in\{1,\dots,m\}$. Then the
$2^{d-1}$-neighborhood of $v$ consists of two subdiamonds of
diameter $2^{d-1}$ each, pasted together at $v$.\medskip

\noindent{\bf (2)} Let $Z_d$ be the set of all vertices of
generation number $d$ in $D_{m,k}$. Then the connected components
of $D_{m,k}\backslash Z_d$ have diameters strictly less than
$2^{d}$.
\end{observation}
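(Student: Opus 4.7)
The plan is to exploit the recursive construction of $D_{m,k}$ directly. For part (1), let $v$ be a vertex of generation $d$; then $v$ was born at step $m-d+1$ when an edge $ab$ of $D_{m-d,k}$ was subdivided, so $v$ is adjacent to $a$ and $b$ in $D_{m-d+1,k}$. Let $S_a$ and $S_b$ denote the subdiamonds of $D_{m,k}$ that evolve from the edges $va$ and $vb$ respectively. Each of $S_a,S_b$ is isomorphic to $D_{d-1,k}$, hence has diameter $2^{d-1}$, and the two meet only at $v$, so once we show that the $2^{d-1}$-neighborhood of $v$ equals $S_a\cup S_b$ we are done.

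To show the inclusion $S_a\cup S_b\subseteq B(v,2^{d-1})$, I would use that $v$ is a corner of each $D_{d-1,k}$ and that intrinsic distances in a subdiamond upper-bound distances in $D_{m,k}$. For the reverse inclusion I need two ingredients: the \emph{separation property} that a subdiamond is attached to the rest of $D_{m,k}$ only at its top and bottom, and the \emph{doubling lemma} that replacing every edge of $D_{n,k}$ by $k$ paths of length $2$ doubles the distance between any two preserved vertices. Separation forces every $vw$-path with $w\notin S_a\cup S_b$ to exit $S_a\cup S_b$ through $a$ or $b$, while the doubling lemma, applied $d-1$ times to $d_{D_{m-d+1,k}}(v,a)=1$, gives $d(v,a)=d(v,b)=2^{d-1}$. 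Hence any such path has length at least $2^{d-1}+1$.

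For part (2), I would identify the connected components of $D_{m,k}\setminus Z_d$ in bijection with $V(D_{m-d,k})$: for each $x\in V(D_{m-d,k})$ the $x$-component equals $\{x\}\cup\bigcup\bigl(S_{xv}\setminus\{v\}\bigr)$, where the union runs over the generation-$d$ midpoints $v$ created at step $m-d+1$ on edges of $D_{m-d,k}$ incident to $x$, and $S_{xv}$ is the $D_{d-1,k}$ subdiamond evolving from the edge $xv$ of $D_{m-d+1,k}$. Bounding the diameter then rests on an auxiliary lemma, provable by induction on $\ell$ via the doubling lemma, that in $D_{\ell,k}$ the distance from a corner to any vertex other than the opposite corner is at most $2^{\ell}-1$. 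Applied with $\ell=d-1$ inside each $S_{xv}\setminus\{v\}$, this yields $d(x,w)\le 2^{d-1}-1$ along a path that avoids $Z_d$ for every $w$ in the $x$-component, and two uses of the triangle inequality cap the diameter at $2(2^{d-1}-1)=2^{d}-2<2^{d}$.

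The main obstacle is a rigorous formulation of the separation property, namely that the only vertices of a subdiamond $S$ incident to edges of $D_{m,k}$ leaving $V(S)$ are its top and bottom. This is geometrically plausible since every subdivision step only inserts midpoints whose neighbors are the two endpoints of the subdivided edge, but a careful argument needs a simultaneous induction on the age of $S$, tracking how its vertex boundary is preserved under every later subdivision step. Once separation and the doubling lemma are in place, both parts of Observation \ref{O:Generat} reduce to the bookkeeping sketched above.
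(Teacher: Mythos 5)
Your proposal is correct; note that the paper states this Observation without any proof, presenting it as an immediate consequence of the recursive construction, and the two ingredients you isolate (the separation property that a subdiamond meets the rest of $D_{m,k}$ only at its top and bottom, and the fact that each subdivision step exactly doubles distances between previously existing vertices) are precisely the facts the authors implicitly rely on here and elsewhere (e.g.\ in the proof of Lemma \ref{L:Entropy_k}). Your identification of the components in part (2) and the bound $2(2^{d-1}-1)<2^d$ via the corner-distance lemma check out, so your sketch is a faithful filling-in of the argument the paper leaves to the reader.
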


Recall that generations for vertices of $T_n$ are defined in the
standard way: the {\it generation} of a vertex in $T_n$ is its
distance to the root.\medskip

Let $n$ and $r$ be satisfying the conditions above, so $1\le r<n$.
Consider any vertex $\tau_{n-r}$ of generation $n-r$ in $T_n$. For
a path $\tau_0,\dots,\tau_{n-r}$  joining the root $\tau_0$ and
$\tau_{n-r}$, inequality \eqref{E:MapDist2} implies that
$d_{D_{m(n),k}}(F_n(\tau_i),F_n(\tau_{i+1}))\le \alpha_n 2^{p(n)}$
and $d_{D_{m(n),k}}(F_n(\tau_0),F_n(\tau_{n-r}))\ge
2^{p(n)}(n-r)$. Combining these inequalities with condition
\eqref{E:Reach} and Observation \ref{O:Generat}~{\bf (2)}, one
concludes that there exists $i\in\{0,\dots,n-r\}$ such that
$d_{D_{m(n),k}}(F_n(\tau_i),v)\le \alpha_n2^{p(n)}$ for some $v$
of generation $d$ in $D_{m(n),k}$.
\medskip

By inequalities \eqref{E:MapDist2}, \eqref{E:CannotLeave} and
Observation \ref{O:Generat}~{\bf(1)}, $F_n$ maps descendants of
$\tau_i$ (in $T_n$) of generations $i+1,\dots,i+r$ (note that
$i+r\le n$) into the union of two subdiamonds of diameter
$2^{d-1}$ each, pasted together at $v$. To obtain a contradiction
with \eqref{E:NotEnoughVert} we need the following lemma:

\begin{lemma}\label{L:Entropy_k} The cardinality of a $2^{p(n)}$-separated set - i.e., a set satisfying $d(u,v)\ge 2^{p(n)}$ for any $u\ne v$ -
in a subdiamond of $D_{m,k}$ of diameter $2^q$ does not exceed
$k\cdot (2k)^{q-p(n)}$ if $q\ge p(n)$.
\end{lemma}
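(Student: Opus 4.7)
The plan is to induct on $s:=q-p(n)\ge 0$, exploiting the recursive self-similarity of diamonds. A subdiamond $S$ of $D_{m,k}$ of diameter $2^q$ is isometric to $D_{q,k}$ and, for $q\ge 1$, decomposes into $2k$ sub-subdiamonds $S_i^t$ (from $t$ to $v_i$) and $S_i^b$ (from $v_i$ to $b$) for $i=1,\dots,k$, each of diameter $2^{q-1}$, meeting only at the top $t$, the bottom $b$, and the $k$ midline vertices $v_1,\dots,v_k$ (which satisfy $d(v_i,t)=d(v_i,b)=2^{q-1}$). Each $S_i^\sigma$ is isometrically embedded in $S$, since the sub-subdiamond is connected to the rest of $S$ only through the cut points $t$ and $v_i$ (respectively $v_i$ and $b$).

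For the inductive step ($s\ge 1$), given a $2^{p(n)}$-separated set $X$ in $S$, each intersection $X\cap S_i^\sigma$ is itself $2^{p(n)}$-separated in a subdiamond of diameter $2^{q-1}$, so by the induction hypothesis has cardinality at most $k(2k)^{q-1-p(n)}$. Summing over the $2k$ pieces (a mild overcount on the shared boundary only strengthens the upper bound) yields
\[|X|\le 2k\cdot k(2k)^{q-1-p(n)}=k(2k)^{q-p(n)},\]
as required.

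The main obstacle is the base case $s=0$ (where $q=p(n)$), in which every two points of $X$ must lie at distance equal to the diameter $2^q$ exactly. The strategy is to show that within any single \emph{side} $R_i:=S_i^t\cup S_i^b$, the only pair of vertices realizing distance $2^q$ is $\{t,b\}$. For $x,y\in R_i\setminus\{t,b\}$ lying in the same half $S_i^\sigma$ one has $d(x,y)\le 2^{q-1}<2^q$; if instead $x\in S_i^t\setminus\{t\}$ and $y\in S_i^b\setminus\{b\}$, then $d(x,y)\le d(x,v_i)+d(v_i,y)<2^{q-1}+2^{q-1}=2^q$, the strict inequality following because $t$ is the unique vertex of $S_i^t$ at distance $2^{q-1}$ from $v_i$ (so $x\ne t$ forces $d(x,v_i)<2^{q-1}$), and symmetrically for $y$. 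Consequently, if $t$ or $b$ belongs to $X$ then $X\subseteq\{t,b\}$; otherwise each of the $k$ sides contributes at most one vertex of $X$, yielding $|X|\le k$. Once this structural fact at the ``resolution'' of the diameter is established, the geometric induction is automatic.
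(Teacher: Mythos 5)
Your proof is correct and is essentially the paper's argument: both bound the separated set by cutting the diameter-$2^q$ subdiamond into $(2k)^{q-p(n)}$ subdiamonds of diameter $2^{p(n)}$, each of which contains at most $k$ points of the set, and your induction on $s=q-p(n)$ simply carries out this decomposition one level at a time instead of in one shot. The only substantive addition is that you prove in detail the base case (at most $k$ separated points in a subdiamond of diameter $2^{p(n)}$), which the paper dismisses as ``easy to see.''
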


\begin{proof} It is easy to see that  each subdiamond of $D_{m,k}$ of diameter
$2^{p(n)}$ contains at most $k$ vertices out of each
$2^{p(n)}$-separated set. The number of subdiamonds of diameter
$2^{p(n)}$ in a diamond of diameter $2^q$ is equal to the number
of edges in the diamond of diameter $2^{q-p(n)}$. This number of
edges is $(2k)^{q-p(n)}$, because in each step of the construction
of diamonds the diameter doubles and the number of edges is
multiplied by $(2k)$.
\end{proof}

This  contradicts \eqref{E:NotEnoughVert} because, on one hand,
the vertex $\tau_i$ has more than $2^r$ descendants in the next
$r$ generations, and the images of these descendants, by the
bilipschitz condition \eqref{E:MapDist2}, should form a
$2^{p(n)}$-separated set. On the other hand, Lemma
\ref{L:Entropy_k} implies that a $2^{p(n)}$-separated set in a
union of two diamonds of diameters $2^{d-1}$ does not exceed
$2\cdot k\cdot (2k)^{d-1-p(n)}=(2k)^{d-p(n)}$.
\medskip

Since $\{c_{(D,k)}(T_n)\}_{n=2}^\infty$ and
$\{\frac{n}{\log_2n}\}_{n=2}^\infty$ are sequences of positive
numbers and $\alpha_n=3c_{(D,k)}(T_n)$, to prove the existence of
a constant $c(k)>0$ such that $c_{(D,k)}(T_n)\ge
c(k)\,\frac{n}{\log_2n}$, it suffices to show that the existence
of the subsequence of values of $n$ for which $\alpha_n=o(\frac
n{\log_2n})$ leads to a contradiction. This will be done by
demonstrating  that the existence of such a subsequence implies
the existence of $n$, $r$ and $d$ satisfying $1\le r<n$ and
\eqref{E:CannotLeave}--\eqref{E:Reach}. Let us rewrite
inequalities \eqref{E:CannotLeave}--\eqref{E:Reach} as:

\begin{equation}\label{E:rCannotLeave_k}2^{d-p(n)}> 2\alpha_n (r+1),\end{equation}

\begin{equation}\label{E:rNotEnoughVert_k}(2^{(d-p(n))})^{\log_2(2k)}< 2^r,\end{equation}

\begin{equation}\label{E:rReach_k} 2^{d-p(n)}<n-r.\end{equation}

Set $r=r(n)=\lceil \log_2(2k)\cdot\log_2n\rceil$. Since $k$ is
fixed, for sufficiently large $n$, one has $n-r> 2$. Define
$d=d(n)\in \mathbb{N}$ to be the largest integer for which
\eqref{E:rReach_k} holds. It has to be pointed out that with this
choice of $d$, the inequality $2^{d-p(n)}>\frac{n}4$ holds when
$n$ is sufficiently large. Since for our choice of $r$ we have
$2\alpha_n (r(n)+1)=o(n)$ for the corresponding subsequence of
values of $n$, it is clear that, for sufficiently large $n$ in the
subsequence, the condition \eqref{E:rCannotLeave_k} is also
satisfied. It remains to observe that, with the described choice
of $r$, the inequality \eqref{E:rNotEnoughVert_k} follows from
\[(2^{d-p(n)})^{\log_2(2k)}<n^{\log_2(2k)}.\]
Since by virtue of \eqref{E:rReach_k},  $2^{d-p(n)}<n$, the last
inequality is obvious.
\end{proof}

\subsection{Diamonds of infinite branching}

In this case, the  methods based on the upper bounds for
cardinalities of $2^{p(n)}$-separated sets in subdiamonds are not
applicable  since the cardinalities are infinite. Consequently,
the method of \cite{Ost14}, which gives weaker estimates, but
works in the case of infinite branching, will be employed.

\begin{proof}[Proof of Theorem \ref{T:InfinBranch}]  Since the upper estimate has already been established in Section \ref{S:Above}, to complete the proof it has to be shown
 that $c_{(D,\infty)}(T_n)\ge c(\infty)\sqrt{n}$ for some
constant $c(\infty)>0$.

If $\alpha_{n}=3c_{(D,\infty)}(T_n)$, then  there exists a map
$F_n$ of $V(T_n)$ into $V(D_{m(n),\infty})$ for some
$m(n)\in\mathbb{N}$ satisfying \eqref{E:MapDist} with
$C=\alpha_{n}$ and the scaling factor being an integer power of
$2$, that is,
\begin{equation}\label{E:MapDist3}\forall u,v\in V(T_n)\quad 2^{p(n)}d_{T_n}(u,v)\le
d_{D_{m(n),\infty}}(F_{n}(u),F_{n}(v))\le \alpha_n
2^{p(n)}d_{T_n}(u,v)\end{equation} for some $p(n)\in\mathbb{Z}$.
With the help of the same argument as in  Theorem
\ref{T:FinBranch},  it may be  assumed that $p(n)\ge 0$.
\medskip

Since $\{c_{(D,\infty)}(T_n)\}_{n=1}^\infty$ as well as
$\{\sqrt{n}\}_{n=1}^\infty$ are sequences of positive numbers, it
suffices to prove that the inequality of the form
$c_{(D,\infty)}(T_n)\ge c\sqrt{n}$ holds for some $c>0$ and
sufficiently large $n$.
\medskip

Assume that $n>9$ and denote by $d=d(n)$  the largest integer
satisfying

\begin{equation}\label{E:Reach_d}
2^d<2^{p(n)}\cdot\left\lfloor\frac{n}3\right\rfloor
\end{equation}

 Consider any vertex
$\tau_{\lfloor\frac{n}3\rfloor}$ of generation
$\left\lfloor\frac{n}3\right\rfloor$ in $T_n$. Let
$\tau_0,\dots,\tau_{\lfloor\frac{n}3\rfloor}$ be a path joining
the root $\tau_0$ and $\tau_{\lfloor\frac{n}3\rfloor}$ in $T_n$.
Inequality \eqref{E:MapDist3} implies:
\[d_{D_{m(n),\infty}}(F_n(\tau_i),F_n(\tau_{i+1}))\le \alpha_n
2^{p(n)}\] and
\[d_{D_{m(n),\infty}}(F_n(\tau_0),F_n(\tau_{\lfloor\frac{n}3\rfloor}))\ge
2^{p(n)}\left\lfloor\frac{n}3\right\rfloor.\]  By combining these
inequalities with condition \eqref{E:Reach_d} and Observation
\ref{O:Generat}~{\bf (2)}, we conclude that there exists
$i\in\{0,\dots,\left\lfloor\frac{n}3\right\rfloor\}$ such that
\begin{equation}\label{E:CloseGend} d_{D_{m(n),\infty}}(F_n(\tau_i),v)\le \alpha_n2^{p(n)}\end{equation} for
some $v$ of generation $d$ in $D_{m(n),\infty}$.

Inequality \eqref{E:MapDist3}  together with \eqref{E:CloseGend}
implies that descendants of $\tau_i$ of generation $n$ in $T_n$
will be mapped onto vertices whose  distances from $v$ are at
least $(n-i)2^{p(n)}-\alpha_n2^{p(n)}$. One has:

\begin{equation}\label{E:Inter} (n-i)2^{p(n)}-\alpha_n2^{p(n)}\ge\left(\frac23\,n-\alpha_n\right)2^{p(n)}\end{equation}
If $\alpha_n\ge \frac{n}3$, the conclusion of the theorem holds
with $c(\infty)=\frac19$. Therefore, assume that $\alpha_n\le
\frac{n}3$. In this case, the right-hand side of \eqref{E:Inter}
is not less than
\[\frac{n}3\,2^{p(n)}\stackrel{\eqref{E:Reach_d}}{>} 2^d>2^{d-1}.\]

Thence, on each path joining $\tau_i$ with one of its descendants
of generation $n$ (in $T_n$) there is a vertex which is mapped by
$F_n$ outside the union of two subdiamonds of height $2^{d-1}$
with the common vertex at $v$.
\medskip

Let  $x_1$ and $x_2$ different from $v$ the tops/bottoms of the
subdiamonds mentioned in the previous paragraph. The statement
about the paths mentioned in the previous paragraph implies that
on each path joining $\tau_i$ with one of its descendants (in
$T_n$) of generation $n$ there is a vertex, the $F_n$-image of
which is at distance at most $\alpha_n2^{p(n)}$ from either $x_1$
or $x_2$. In fact, it is clear that this condition holds for the
first vertex on the path whose $F_n$-image is outside the union of
the subdiamonds.

Now, let us fix such a path and estimate from below the generation
$r$ of the first vertex on this path, whose $F_n$-image is outside
the union of the subdiamonds. It can be seen by using inequality
\eqref{E:MapDist3} that the earliest generation $r$ for which it
is possible for such image to be outside the union of the
subdiamonds has to satisfy
\[(r-i)2^{p(n)}\alpha_n+2^{p(n)}\alpha_n>2^{d-1},\]
whence
\[(r-i)\ge\frac{2^{d-1}}{2^{p(n)}\alpha_n}-1.\]

The choice of $d$ - see the line preceding \eqref{E:Reach_d} -
implies that
\begin{equation}\label{E:d}2^{d+1}\ge
2^{p(n)}\cdot\left\lfloor\frac{n}3\right\rfloor,\end{equation} and
hence
\[(r-i)\ge \frac{1}{4\alpha_n}\left\lfloor\frac{n}3\right\rfloor-1.\]

Next, consider four different descending paths in $T_n$ starting
at different descendants of $\tau_i$ of generation $(i+2)$. Along
each of these paths we pick the first vertex whose $F_n$-image is
outside the union of the two subdiamonds. Let $v_1,v_2,v_3$, and
$v_4$ be the picked vertices and suppose that these vertices
belong to generations $r_1,r_2,r_3$, and $r_4$, respectively, in
$T_n$.
\medskip

First,  assume that $r_j>i+2$ for $j=1,2,3,4$, while the case
where $r_j=i+2$ for some $j$ will be considered at the very end of
the proof. By the argument above, each $r_j$ satisfies:
\[(r_j-i)\ge \frac{1}{4\alpha_n}\left\lfloor\frac{n}3\right\rfloor-1.\]
Therefore the pairwise distances between vertices $v_1,v_2,v_3$,
and $v_4$ are at least:

\[r_{j_1}+r_{j_2}-2i-2\ge
\frac{1}{2\alpha_n}\left\lfloor\frac{n}3\right\rfloor-4,\quad
j_1,j_2\in\{1,2,3,4\},~j_1\ne j_2. \]

The argument above implies that under the assumption $r_j>i+2$ the
image $F_n(v_j)$ is at distance at most $2^{p(n)}\alpha_n$ to
either $x_1$ or $x_2$. As a result, at least two of these images
are at distance at most $2\cdot 2^{p(n)}\alpha_n$ from each other.
Using \eqref{E:MapDist3} one concludes:

\[2^{p(n)}\left(\frac{1}{2\alpha_n}\left\lfloor\frac{n}3\right\rfloor-4\right)\le
2\cdot2^{p(n)}\alpha_n\]

It is easy to see that this inequality implies that $\alpha_n\ge
c\sqrt{n}$ for some constant $c>0$ and sufficiently large $n$.
\medskip

Now comes  the case where $r_j=i+2$ for some $j\in\{1,2,3,4\}$. In
this case, the distance between $F_n(v_j)$ and $v$, on one hand,
is $> 2^{d-1}$, and, on the other hand, by \eqref{E:CloseGend} and
\eqref{E:MapDist3}, it is $\le 3\alpha_n2^{p(n)}$. This leads to
$3\alpha_n2^{p(n)}>2^{d-1}$. Combining with \eqref{E:d}, one
obtains:
\[3\alpha_n2^{p(n)}>2^{p(n)-2}\cdot\left\lfloor\frac{n}3\right\rfloor.\]
Thus,
$\alpha_n\ge\frac1{12}\cdot\left\lfloor\frac{n}3\right\rfloor$,
yielding  that in this case
$c_{(D,\infty)}(T_n)\ge\frac1{36}\cdot\left\lfloor\frac{n}3\right\rfloor$.
This inequality is sufficient for our purposes. This completes the
proof of Theorem \ref{T:InfinBranch}.
\end{proof}

\section{Acknowledgement}

The work on this paper was inspired by a question on the order of
distortion of embeddings of binary trees into diamonds, asked by
Assaf Naor during the National AMS meeting in Baltimore, Maryland,
2014. The question about analogues of the main result for diamonds
with high branching was asked by Thomas Schlumprecht during the
Workshop in Analysis and Probability, Texas A\&M University, 2016.

The last-named author gratefully acknowledges the support by
National Science Foundation DMS-1201269 and by Summer Support of
Research program of St. John's University during different stages
of work on this paper. Part of the work on this paper was done
when the last-named author was a participant of the NSF supported
Workshop in Analysis and Probability, Texas A\&M University, 2016.

We would like to thank the referee for careful reading of the
paper and for suggesting important improvements of our
presentation.

\end{large}

\renewcommand{\refname}{\section{References}}

\textsc{Department of Mathematical Sciences, Kent State
University, Kent, Ohio 44242}
\par \textit{E-mail address}:
\texttt{sleung1@kent.edu}\par\smallskip

\textsc{Department of Mathematics and Statistics, Hunter College,
CUNY, New York, NY 10065}\par \textit{E-mail address}:
\texttt{sarah.nelson07@yahoo.com}\par\smallskip

\textsc{Department of Mathematics, Atilim University, 06836 Incek,
Ankara, TURKEY} \par \textit{E-mail address}:
\texttt{sofia.ostrovska@atilim.edu.tr}\par\smallskip

\textsc{Department of Mathematics and Computer Science, St. John's
University, 8000 Utopia Parkway, Queens, NY 11439, USA} \par
  \textit{E-mail address}: \texttt{ostrovsm@stjohns.edu} \par

\end{document}